\documentclass[12pt]{amsart}
\textheight=9in
\textwidth=6.5in
\calclayout
\usepackage{amssymb}
\usepackage{graphics}
\usepackage{epsfig, color}
\parskip=0pt plus 1pt
\linespread{1.2}

\newcommand\RR{\mathbb R}

\newcommand\K{\mathcal K}

\newcommand\T{\mathcal T}
\newcommand\E{\mathcal E}

\renewcommand\O{\mathcal O}

\renewcommand\div{\operatorname{div}}

\newcommand\x{\times}

\newcommand\lbra{[\![}  
\newcommand\rbra{]\!]}

\newcommand\lbrac{\,[\!\!\!\{} 
\newcommand\rbrac{\}\!\!\!]\,}

\renewcommand\ll{|\kern-2pt|\kern-2pt|}

\renewcommand\d{\mathrm{d}}

\numberwithin{equation}{section}
\theoremstyle{plain}
\newtheorem{thm}{Theorem}

\newtheorem{lem}[thm]{Lemma}

\numberwithin{thm}{section}

\theoremstyle{remark}



\def\stack#1#2#3{\rlap{#1}\lower#3\hbox{#2}}


\def\twiddlespace{1.2truept}

\def\dtwiddle{\displaystyle\sim}
\def\ttwiddle{\textstyle\sim}
\def\stwiddle{\scriptstyle\sim}
\def\sstwiddle{\scriptscriptstyle\sim}

\def\doubledtwiddle{\stack{$\dtwiddle$}{$\dtwiddle$}{\twiddlespace}}
\def\doublettwiddle{\stack{$\ttwiddle$}{$\ttwiddle$}{\twiddlespace}}
\def\doublestwiddle{\stack{$\stwiddle$}{$\stwiddle$}{\twiddlespace}}
\def\doublesstwiddle{\stack{$\sstwiddle$}{$\sstwiddle$}{\twiddlespace}}

\def\tripledtwiddle{\stack{$\dtwiddle$}{$\doubledtwiddle$}{\twiddlespace}}
\def\triplettwiddle{\stack{$\ttwiddle$}{$\doublettwiddle$}{\twiddlespace}}
\def\triplestwiddle{\stack{$\stwiddle$}{$\doublestwiddle$}{\twiddlespace}}
\def\triplesstwiddle{\stack{$\sstwiddle$}{$\doublesstwiddle$}{\twiddlespace}}

\def\quadrupledtwiddle{\stack{$\dtwiddle$}{$\tripledtwiddle$}{\twiddlespace}}
\def\quadruplettwiddle{\stack{$\ttwiddle$}{$\triplettwiddle$}{\twiddlespace}}
\def\quadruplestwiddle{\stack{$\stwiddle$}{$\triplestwiddle$}{\twiddlespace}}
\def\quadruplesstwiddle{\stack{$\sstwiddle$}{$\triplesstwiddle$}{\twiddlespace}}\def\quadru
pletwiddle{{\mathchoice{\quadrupledtwiddle}{\quadruplettwiddle}%

{\quadruplestwiddle}{\quadruplesstwiddle}}}



\catcode`\!=11
\newcommand\!a{{\boldsymbol a}}
\newcommand\!b{{\boldsymbol b}}
\newcommand\!c{{\boldsymbol c}}
\newcommand\!d{{\boldsymbol d}}
\newcommand\!e{{\boldsymbol e}}
\newcommand\!f{{\boldsymbol f}}
\newcommand\!g{{\boldsymbol g}}
\newcommand\!h{{\boldsymbol h}}
\newcommand\!i{{\boldsymbol i}}
\newcommand\!j{{\boldsymbol j}}
\newcommand\!k{{\boldsymbol k}}
\newcommand\!l{{\boldsymbol l}}
\newcommand\!m{{\boldsymbol m}}
\newcommand\!n{{\boldsymbol n}}
\newcommand\!o{{\boldsymbol o}}
\newcommand\!p{{\boldsymbol p}}
\newcommand\!q{{\boldsymbol q}}
\newcommand\!r{{\boldsymbol r}}
\newcommand\!s{{\boldsymbol s}}
\newcommand\!t{{\boldsymbol t}}
\newcommand\!u{{\boldsymbol u}}
\newcommand\!v{{\boldsymbol v}}
\newcommand\!w{{\boldsymbol w}}
\newcommand\!x{{\boldsymbol x}}
\newcommand\!y{{\boldsymbol y}}
\newcommand\!z{{\boldsymbol z}}
\newcommand\!A{{\boldsymbol A}}
\newcommand\!B{{\boldsymbol B}}
\newcommand\!C{{\boldsymbol C}}
\newcommand\!D{{\boldsymbol D}}
\newcommand\!E{{\boldsymbol E}}
\newcommand\!F{{\boldsymbol F}}
\newcommand\!G{{\boldsymbol G}}
\newcommand\!H{{\boldsymbol H}}
\newcommand\!I{{\boldsymbol I}}
\newcommand\!J{{\boldsymbol J}}
\newcommand\!K{{\boldsymbol K}}
\newcommand\!L{{\boldsymbol L}}
\newcommand\!M{{\boldsymbol M}}
\newcommand\!N{{\boldsymbol N}}
\newcommand\!O{{\boldsymbol O}}
\newcommand\!P{{\boldsymbol P}}
\newcommand\!Q{{\boldsymbol Q}}
\newcommand\!R{{\boldsymbol R}}
\newcommand\!S{{\boldsymbol S}}
\newcommand\!T{{\boldsymbol T}}
\newcommand\!U{{\boldsymbol U}}
\newcommand\!V{{\boldsymbol V}}
\newcommand\!W{{\boldsymbol W}}
\newcommand\!X{{\boldsymbol X}}
\newcommand\!Y{{\boldsymbol Y}}
\newcommand\!Z{{\boldsymbol Z}}
\newcommand\!alpha{{\boldsymbol\alpha}}
\newcommand\!beta{{\boldsymbol\beta}}
\newcommand\!gamma{{\boldsymbol\gamma}}
\newcommand\!delta{{\boldsymbol\delta}}
\newcommand\!epsilon{{\boldsymbol\epsilon}}
\newcommand\!zeta{{\boldsymbol\zeta}}
\newcommand\!eta{{\boldsymbol\eta}}
\newcommand\!theta{{\boldsymbol\theta}}
\newcommand\!iota{{\boldsymbol\iota}}
\newcommand\!kappa{{\boldsymbol\kappa}}
\newcommand\!lambda{{\boldsymbol\lambda}}
\newcommand\!mu{{\boldsymbol\mu}}
\newcommand\!nu{{\boldsymbol\nu}}
\newcommand\!xi{{\boldsymbol\xi}}
\newcommand\!pi{{\boldsymbol\pi}}
\newcommand\!rho{{\boldsymbol\rho}}
\newcommand\!sigma{{\boldsymbol\sigma}}
\newcommand\!tau{{\boldsymbol\tau}}
\newcommand\!upsilon{{\boldsymbol\upsilon}}
\newcommand\!phi{{\boldsymbol\phi}}
\newcommand\!chi{{\boldsymbol\chi}}
\newcommand\!psi{{\boldsymbol\psi}}
\newcommand\!omega{{\boldsymbol\omega}}
\newcommand\!varepsilon{{\boldsymbol\varepsilon}}
\newcommand\!vartheta{{\boldsymbol\vartheta}}
\newcommand\!varpi{{\boldsymbol\varpi}}
\newcommand\!varrho{{\boldsymbol\varrho}}
\newcommand\!varsigma{{\boldsymbol\varsigma}}
\newcommand\!varphi{{\boldsymbol\varphi}}
\newcommand\!Gamma{{\boldsymbol\Gamma}}
\newcommand\!Delta{{\boldsymbol\Delta}}
\newcommand\!Theta{{\boldsymbol\Theta}}
\newcommand\!Lambda{{\boldsymbol\Lambda}}
\newcommand\!Xi{{\boldsymbol\Xi}}
\newcommand\!Pi{{\boldsymbol\Pi}}
\newcommand\!Sigma{{\boldsymbol\Omega\eigma}}
\newcommand\!Upsilon{{\boldsymbol\Upsilon}}
\newcommand\!Phi{{\boldsymbol\Phi}}
\newcommand\!Psi{{\boldsymbol\Psi}}
\newcommand\!Omega{{\boldsymbol\Omega}}

\begin{document}

\title [Compact embedding]
{Compact embedding
in the space\\ of piecewise $H^1$ functions}

\author{Sheng Zhang}
\thanks{Department of Mathematics, Wayne State University, Detroit, MI 48202 (\texttt{szhang@wayne.edu})}


\begin{abstract}
We prove a compact embedding theorem in a class of spaces of piecewise $H^1$ functions subordinated
to a class of shape regular, but not necessarily quasi-uniform triangulations of a polygonal domain.
This result generalizes the Rellich--Kondrachov theorem.
It is used to prove generalizations to piecewise functions
of nonstandard Poincar\'e--Friedrichs 
inequalities. It can be used to prove Korn inequalities
for piecewise functions associated with elastic shells.
\vspace{12pt}

\noindent{\sc Key words.} Piecewise $H^1$ functions, compact embedding,
Rellich--Kondrachov, Poincar\'e--Friedrichs inequality.
\newline \noindent{\sc Subject classification.} 65N30, 46E35, 74S05.
\end{abstract}
\maketitle

\section{Introduction}
Let $\Omega$ be a bounded polygonal domain in $\RR^2$.
Let $\T_h$ be a shape regular, but not necessarily quasi-uniform,
triangulation on $\Omega$.
We also use $\T_h$ to denote the set of all (open) triangular elements
of the partition.
Let $H^1_h$ be the space of piecewise $H^1$ functions subordinated to the triangulation
$\T_h$. A function in $H^1_h$ is independently defined on
every element $\tau\in\T_h$ on which it belongs to $H^1(\tau)$. This kind of space of
piecewise functions arises in analysis of discontinuous finite element methods.
It is desirable to generalize the Sobolev space theory to such spaces.
Poincar\'e--Friedrichs type inequalities have been proposed and proved in several
forms in the literature \cite{A-int, Brenner-1, Feng}. In this paper, we
prove a compact embedding theorem that generalizes the Rellich--Kondrachov theorem.
Such compact embedding theorem can
be used, together with a modified
compactness argument, to prove the aforementioned Poincar\'e--Friedrichs
type inequality for piecewise functions
in a general form, including those in nonstandard forms \cite{Dahmen}.
It seems necessary
to generalizing Korn's inequalities on curved surfaces \cite{Ciarlet}
to spaces of piecewise functions, which
is useful in analyzing
discontinuous Galerkin finite element methods
for  models of elastic shells.

Shape regularity of triangulations
is a crucial notion in this paper. It is worthwhile to recall its definition
here. Considering a triangle, we let $r$ and $R$ be the radii of its
largest inscribed circle and smallest circumscribed circle, respectively.
Then the ratio $R/r$ is called its shape regularity constant, or simply shape regularity.
For a triangulation, the maximum
of shape regularity constants of all its triangles
is called its shape regularity constant \cite{Ciarlet-FEM-book}, denoted by $\K$.
We will need to consider a (infinite) family of triangulations. For a family, the
{\em shape regularity} constant $\K$ is the supreme of all the shape regularity constants
of its triangulations.
For a given triangulation $\T_h$, we use $\Omega_h$ to denote the union of
all the open triangular elements,
and use $\E^0_h$ denote the set of all interior (open) edges and $\E^\partial_h$ all
boundary edges.
A function $u$ in $H^1_h$
is certainly
in $L^2(\Omega)$ (in which the function value on $\E^0_h$ has no significance).
On an edge $e\in\E^0_h$, a function $u$ has two different traces from the
two elements sharing $e$. We use $\lbra u\rbra$ to denoted the difference
of the two traces, which is the jump of $u$ over $e$.
We furnish the space
$H^1_h$ with the norm
\begin{equation}\label{H1hnorm}
\|u\|_{H^1_h}:=\left[\|u\|^2_{L^2(\Omega)}+\int_{\Omega_h}|\nabla u|^2
+\sum_{e\in\E^0_h}\frac{1}{|e|}\int_e\lbra u\rbra^2\right]^{1/2}.
\end{equation}
Here $|e|$ is the length of the edge $e$. The integrals are taken with respect to
Lebesgue measures of the integration domains.
Let $\{\T_{h_i}\}$ be a family of triangulations
with a certain regularity constant $\K$.
Let $u_{i}\in H^1_{h_i}$ be uniformly bounded sequence of piecewise $H^1$ functions, i.e.,
there is a constant $C$ such that
$\|u_i\|_{H^1_{h_i}}\le C$ for all $i$. Then there is a subsequence $\{u_{i_k}\}$ that
is convergent in $L^2(\Omega)$.
It is this compact embedding theorem on
families of spaces that is needed to prove, for example, a Poincar\'e--Friedrichs
inequality that
there is a constant $C$ that is dependent on $\Omega$ and dependent on the triangulation
$\T_h$ only through its shape regularity but otherwise
{\em independent of $\T_h$} such that
\begin{equation*}  
\|u\|_{L^2(\Omega)}\le C
\left[\int_{\Omega_h}|\nabla u|^2
+\sum_{e\in\E^0_h}\frac{1}{|e|}\int_e\lbra u\rbra^2
+\int_{\E^{\partial}_h}u^2\right]^{1/2}\ \ \forall\ u\in H^1_h(\Omega).
\end{equation*}
Such constant independence of triangulations is fundamental in numerical analysis.

The remaining of the paper is organized as follows. In Section~\ref{sec-trace}, we present a trace
theorem for piecewise $H^1$ functions. This result will be used several times.
In Section~\ref{sec-compact}
we prove the compact embedding theorem. In the last section, we provide a new proof
for Poincar\'e--Friedrichs type inequalities for piecewise functions.
Throughout the paper, $C$ will be a generic constant that may depend on the domain $\Omega$ and shape
regularity $\K$ of a triangle, of a triangulation, or of a family of triangulations. But otherwise,
the constant is independent of triangulations.

\section{A trace theorem}\label{sec-trace}
We first prove a trace theorem for piecewise $H^1$ functions.
This result itself is a generalization of a trace theorem of Sobolev space theory.
It will be used in proving the compact embedding theorem, and in proving a Poincar\'e--Friedrichs inequality by using the compact embedding theorem.
We will need the following trace theorem on an element \cite{A-int}.
\begin{lem}
Let $\tau$ be a triangle, and $e$ one of its edges. Then there is a
constant $C$ depending on the shape regularity of $\tau$ such that
\begin{equation}\label{trace}
\int_eu^2\le C\left[|e|^{-1}\int_{\tau}u^2+|e|\int_{\tau}|\nabla u|^2\right]
\ \ \forall\ u\in H^1(\tau).
\end{equation}
\end{lem}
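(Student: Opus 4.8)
The plan is to establish the inequality first for $u\in C^\infty(\bar\tau)$ and then pass to all of $H^1(\tau)$ by density. Since $C^\infty(\bar\tau)$ is dense in $H^1(\tau)$ and the trace map $H^1(\tau)\to L^2(e)$ is bounded (a qualitative, constant-free fact from Sobolev theory), both sides of \eqref{trace} depend continuously on $u$ in the $H^1(\tau)$-norm, so an estimate proved for smooth $u$ with a constant depending only on the geometry survives the limit. For smooth $u$ the core is an integral identity obtained from the divergence theorem with a carefully chosen vector field, which I prefer to a change of variables onto a reference triangle because it produces the weights $|e|^{-1}$ and $|e|$ directly.

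Concretely, let $\mathbf{x}_0$ be the vertex of $\tau$ opposite $e$, let $\mathbf{n}$ be the outward unit normal along $e$, and let $h$ be the height of $\tau$ from $\mathbf{x}_0$ (the distance from $\mathbf{x}_0$ to the line containing $e$). Set $\mathbf{q}(\mathbf{x})=(\mathbf{x}-\mathbf{x}_0)/h$. The key geometric observation is that $\mathbf{q}\cdot\mathbf{n}\equiv 1$ on $e$, while on the other two edges --- which emanate from $\mathbf{x}_0$ --- the vector $\mathbf{x}-\mathbf{x}_0$ is tangential, so $\mathbf{q}\cdot\mathbf{n}\equiv 0$ there. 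Hence $\int_{\partial\tau}u^2\,(\mathbf{q}\cdot\mathbf{n})\,ds=\int_e u^2$, and the divergence theorem applied to $u^2\mathbf{q}$, using $\div\mathbf{q}=2/h$, gives
\[ \int_e u^2 = \frac{2}{h}\int_\tau u^2 + \frac{2}{h}\int_\tau u\,\nabla u\cdot(\mathbf{x}-\mathbf{x}_0). \]
Bounding $|\mathbf{x}-\mathbf{x}_0|$ by the diameter $d$ of $\tau$, then applying Cauchy--Schwarz and Young's inequality to the cross term (splitting it so as to manufacture exactly the weights $|e|^{-1}$ and $|e|$), I obtain
\[ \int_e u^2 \le \left(\frac{2}{h}+\frac{d}{h\,|e|}\right)\int_\tau u^2 + \frac{d}{h}\,|e|\int_\tau|\nabla u|^2. \]

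It then remains to absorb the geometric factors into a constant depending only on the shape regularity $\K$. Because $R/r\le\K$ forces the minimum angle of $\tau$ to be bounded below by a function of $\K$, the edge lengths, the diameter $d$, and the height $h$ are mutually comparable with ratios controlled by $\K$; in particular $h\ge c(\K)\,d$, which together with $|e|\le d$ yields $d/h\le C(\K)$ and $|e|/h\le C(\K)$, and hence $2/h\le C(\K)/|e|$ and $d/(h\,|e|)\le C(\K)/|e|$. Substituting these bounds into the last display gives exactly \eqref{trace}.

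I expect the main obstacle to be not the computation but the careful bookkeeping that makes the constant depend on $\tau$ \emph{only} through its shape regularity: the elementary but fussy geometry bounding $d/h$ and $|e|/h$ via the minimum angle, and the translation of that angle bound from the stated $R/r$ definition of $\K$. A secondary, purely technical point is the density step, where one must use the qualitative boundedness of the trace operator to justify approximating $u\in H^1(\tau)$ by smooth functions before transferring the explicit constant. An affine-scaling argument onto a fixed reference triangle, tracking the norms of the Jacobian, its inverse, and its determinant, reaches the same estimate, but the divergence-theorem route confines all shape-regularity dependence to a single transparent comparison of lengths.
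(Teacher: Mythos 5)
Your proof is correct, but there is no proof in the paper to compare it against: the paper states this lemma as a known result, citing Arnold \cite{A-int}, and never proves it. What your argument supplies is, in effect, the missing proof, and it does so with precisely the device the paper itself relies on elsewhere: your field $\mathbf{q}(\mathbf{x})=(\mathbf{x}-\mathbf{x}_0)/h$ is, up to translation, the paper's building block $\boldsymbol\psi(x,y)=\langle x,y\rangle/H$ (normal component $1$ on the opposite side, $0$ on the two sides through the vertex, divergence $2/H$) used in the proof of Theorem~\ref{tracetheorem} and the construction following it; the paper just never turns that field back on a single element to prove \eqref{trace}. Your route also differs from the cited source, where the estimate comes from affine scaling to a reference triangle; what the divergence-theorem route buys is self-containedness and a transparent constant, with all shape dependence concentrated in the single ratio $d/h$. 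On that last point, the ``fussy geometry'' you worry about can be collapsed to one line that uses the stated $R/r$ definition of $\K$ directly, with no detour through the minimum angle: writing $s$ for the semiperimeter and $r$, $R$ for the inradius and circumradius, one has $\tfrac12|e|\,h=\mathrm{area}(\tau)=rs$ and $s\ge|e|$, hence $h\ge 2r$, while $d\le 2R$, so $d/h\le R/r\le\K$; combined with $|e|\le d$ this gives $2/h\le 2\K/|e|$, $d/(h|e|)\le\K/|e|$ and $d|e|/h\le\K|e|$, i.e.\ \eqref{trace} holds with $C=3\K$. The density step is fine as stated.
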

\begin{thm}\label{tracetheorem}
Let $\T_h$ be a shape regular, but not necessarily quasi-uniform triangulation of $\Omega$. There
exists a constant $C$ dependent on $\Omega$ and the shape regularity
constant of $\T_h$, but otherwise independent of the triangulation such that
\begin{equation}\label{Omega-trace}
\|u\|_{L^2(\partial\Omega)}\le C \|u\|_{H^1_h}\ \ \forall\ u\in H^1_h.
\end{equation}
\end{thm}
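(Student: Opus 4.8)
The plan is to avoid summing the element trace inequality \eqref{trace} edge-by-edge over $\E^\partial_h$ --- which fails outright because the factor $|e|^{-1}$ is unbounded on a mesh that is not quasi-uniform --- and instead to use a Rellich-type integration-by-parts identity that converts the boundary integral into a volume integral plus a sum of interior-edge contributions matching the jump seminorm in \eqref{H1hnorm}. First I would fix, once and for all, a smooth vector field $\beta$ on $\overline\Omega$ with $\beta\cdot n\ge 1$ on $\partial\Omega$ and $\|\beta\|_{W^{1,\infty}(\Omega)}\le C$, where $n$ is the outward unit normal and $C$ depends only on $\Omega$. Such a field exists for any polygon: the outward normal is constant on each edge, and at each vertex the two half-plane conditions $w\cdot n_j\ge 1$ for the adjacent edges are simultaneously solvable, since the normals of two adjacent edges are never antipodal (the interior angle is not $\pi$), so a partition of unity followed by mollification produces the desired global $\beta$.

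Next I would apply the divergence theorem to $u^2\beta$ on each element $\tau\in\T_h$, which is legitimate since $u^2\beta\in W^{1,1}(\tau)$ for $u\in H^1(\tau)$, and sum over $\tau$. Writing $\div(u^2\beta)=u^2\div\beta+2u\,\beta\cdot\nabla u$ and collecting the edge terms, each interior edge $e\in\E^0_h$ shared by $\tau^+,\tau^-$ contributes $\int_e[(u^+)^2-(u^-)^2]\,\beta\cdot n_e=\int_e\lbra u\rbra(u^++u^-)\,\beta\cdot n_e$, while the boundary edges assemble to $\int_{\partial\Omega}u^2\,\beta\cdot n$. This yields the identity
\begin{equation*}
\int_{\partial\Omega}u^2(\beta\cdot n)=\int_{\Omega_h}u^2\div\beta+2\int_{\Omega_h}u\,\beta\cdot\nabla u-\sum_{e\in\E^0_h}\int_e\lbra u\rbra(u^++u^-)(\beta\cdot n_e).
\end{equation*}
Since $\beta\cdot n\ge 1$ on $\partial\Omega$, the left-hand side bounds $\|u\|_{L^2(\partial\Omega)}^2$ from above, so it remains to estimate the three terms on the right by $C\|u\|_{H^1_h}^2$. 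The first two are immediate: $\int_{\Omega_h}u^2\div\beta\le\|\div\beta\|_{\infty}\|u\|_{L^2(\Omega)}^2$, and $2\int_{\Omega_h}u\,\beta\cdot\nabla u\le\|u\|_{L^2(\Omega)}^2+\|\beta\|_{\infty}^2\int_{\Omega_h}|\nabla u|^2$ by Young's inequality.

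The main work, and the step I expect to be the principal obstacle, is the interior-edge sum. Bounding $|\beta\cdot n_e|\le\|\beta\|_{\infty}$ and applying Cauchy--Schwarz first on each edge (where the weights $|e|^{-1/2}$ and $|e|^{1/2}$ cancel) and then over the set of edges, I would write
\begin{equation*}
\Big|\sum_{e\in\E^0_h}\int_e\lbra u\rbra(u^++u^-)(\beta\cdot n_e)\Big|\le C\Big(\sum_{e\in\E^0_h}\frac{1}{|e|}\int_e\lbra u\rbra^2\Big)^{1/2}\Big(\sum_{e\in\E^0_h}|e|\int_e(u^++u^-)^2\Big)^{1/2}.
\end{equation*}
The first factor is exactly the jump part of $\|u\|_{H^1_h}$. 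The crucial point for the second factor is that the weights $|e|^{-1}$ and $|e|$ are balanced: applying the element trace inequality \eqref{trace} to $u^{\pm}$ on $\tau^{\pm}$ gives $|e|\int_e(u^\pm)^2\le C[\int_{\tau^\pm}u^2+|e|^2\int_{\tau^\pm}|\nabla u|^2]$, and since $|e|\le\operatorname{diam}\Omega$ and each triangle carries at most three edges, summing yields $\sum_{e\in\E^0_h}|e|\int_e(u^++u^-)^2\le C(\|u\|_{L^2(\Omega)}^2+\int_{\Omega_h}|\nabla u|^2)$. This is precisely where shape regularity enters, through the constant in \eqref{trace}. Combining the two factors bounds the interior-edge sum by $C\|u\|_{H^1_h}^2$, and assembling all three terms gives \eqref{Omega-trace}. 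The only genuinely delicate points are the construction of $\beta$ near reentrant vertices and the bookkeeping of the bounded edge multiplicity per element; the cancellation of the dangerous $|e|^{-1}$ factor is automatic in this formulation, which is the reason for preferring the Rellich identity over a naive edgewise application of \eqref{trace}.
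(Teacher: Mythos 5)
Your proposal is correct and takes essentially the same route as the paper's own proof: an elementwise divergence-theorem (Rellich) identity with a boundary-adapted vector field, factorization of each interior-edge term into jump times average, and a $|e|^{\pm 1/2}$-weighted Cauchy--Schwarz combined with the element trace inequality \eqref{trace} so that the dangerous $|e|^{-1}$ factor cancels. The only difference is cosmetic: where you mollify a globally smooth $\beta$ with $\beta\cdot\!n\ge 1$ via a partition of unity, the paper uses an explicit piecewise smooth field (supported on triangles attached to the boundary segments, zero elsewhere) whose normal component is continuous across any segment and equals $1$ on $\partial\Omega$.
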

\begin{proof}
Let $\!phi$ be a piecewise smooth
vector field on $\Omega$ whose normal component is continuous across any straight line segment, and
such that $\!phi\cdot\!n=1$ on $\partial\Omega$. (The piecewise smoothness of $\!phi$ is not
associated with the triangulation $\T_h$. A construction of such vector field is given below.)
On each element
$\tau\in\T_h$, we have
\begin{equation*}
\int_{\partial\tau}u^2\!phi\cdot\!n=\int_\tau \div(u^2\!phi)=
\int_\tau(2u\nabla u\cdot\!phi+u^2\div\!phi).
\end{equation*}
Summing up over all elements of $\T_h$, we get
\begin{equation*}
\int_{\partial\Omega}u^2=-\sum_{e\in\E^0_h}\int_e\lbra u^2\!phi\rbra
+\int_{\Omega_h}(2u\nabla u\cdot\!phi+u^2\div\!phi).
\end{equation*}
If $e$ is the border between the elements
$\tau_1$ and
$\tau_2$ with outward normals $\!n_1$ and $\!n_2$, then
$\lbra u^2\!phi\rbra=u^2_1\!phi_1\cdot\!n_1+u^2_2\!phi_2\cdot\!n_2$,
where $u_1$ and $u_2$ are restrictions of $u$
on $\tau_1$ and $\tau_2$, respectively.  It is noted that although $\!phi$ may be discontinuous
across $e$, it normal component is continuous, i.e., $\!phi_1\cdot\!n_1+\!phi_2\cdot\!n_2=0$.
On the edge $e$, we have
$|\lbra u^2\!phi\rbra|\le|\lbra u^2\rbra|\|\!phi\|_{0,\infty,\Omega}$.
Here, $|\lbra u^2\rbra|=|u_1^2-u_2^2|$. It is noted that $|\lbra u^2\rbra|=2|\lbra u\rbra\lbrac u\rbrac|$,
with $\lbrac u\rbrac=(u_1+u_2)/2$ being the average. We have
\begin{multline}\label{jump-on-e}
\int_e|\lbra u^2\!phi\rbra|\le
2|\!phi|_{0,\infty,\Omega}
\left[|e|^{-1}\int_e\lbra u\rbra^2\right]^{1/2}\left[|e|\int_e\lbrac u\rbrac^2\right]^{1/2}\\
\le C
|\!phi|_{0,\infty,\Omega}\left[|e|^{-1}\int_e\lbra u\rbra^2\right]^{1/2}
\left[\int_{\delta e}u^2+|e|^2\int_{\delta e}|\nabla u|^2\right]^{1/2}.
\end{multline}
Here, $C$ only depends on the shape regularity of $\tau_1$ and $\tau_2$.
We used $\delta e=\tau_1\cup\tau_2$ to denote the ``co-boundary''
of edge $e$,
and we used the trace
estimate \eqref{trace}.
It then follows from the Cauchy--Schwarz inequality that
\begin{equation*}
\|u\|^2_{L^2(\partial\Omega)}\le C(|\!phi|_{0,\infty,\Omega}+|\div\!phi|_{0,\infty,\Omega})
\left[\|u\|^2_{L^2(\Omega)}+\int_{\Omega_h}|\nabla u|^2
+\sum_{e\in\E^0_h}\frac{1}{|e|}\int_e\lbra u\rbra^2\right].
\end{equation*}
Here the constant $C$ only depends on the shape regularity of $\T_h$.
The dependence on $\Omega$ of the $C$ in \eqref{Omega-trace}
is hidden in the $\!phi$ in the above inequality.
\end{proof}
%
%
We describe a construction of the vector field $\!phi$ used in the proof.
On the $xy$-plane, we consider a triangle $OAB$ with the origin being its
vertex $O$. Let the distance from $O$ to the side $AB$ be $H$. Then the field
$\!psi(x, y)=\langle x, y\rangle/H$ has the property that $\!psi\cdot\!n=1$ on $AB$
and $\!psi\cdot\!n=0$ on $OA$ and $OB$. Also $|\!psi|_{0,\infty}=\max\{|OA|, |OB|\}/H$
and $\div\!psi=2/H$.
For each straight segment of $\partial\Omega$, we define a triangle
with the straight segment being a side
whose opposite vertex is in $\Omega$, then we define a vector field on this triangle
as on the triangle $OAB$ with $AB$ being the straight side.
We need to assure that all such triangles do not overlap. We then piece together
all these vector fields and fill up the remaining part of the domain by a zero
vector field. This defines the desired vector field used in the proof.

\section{Compact embedding in the space of piecewise $H^1$ functions}\label{sec-compact}
We again assume that $\T_h$
is an arbitrary triangulation with shape regularity $\K$.
For $\delta>0$, we define a boundary strip $\Omega_\delta$ of width $\O(\delta)$.
We let $\Omega_\delta^0=\Omega\setminus \overline{\Omega_\delta}$ be the interior domain.
The interior domain $\Omega^0_\delta$ has the property that if a point is in $\Omega^0_\delta$, then the
disk centered at the point with radius $\delta$ entirely lies in $\Omega$.
We first show that when the strip is thin, the $L^2(\Omega_\delta)$
norm of the restriction on $\Omega_\delta$ of a function in $H^1_h$ must be small.
\begin{lem}\label{bd-shift-theorem}
There is a constant $C$ depending on $\Omega$ and the shape regularity $\K$ of $\T_h$,
but otherwise independent
of $\T_h$, such that
\begin{equation}\label{stripe}
\int_{\Omega_{\delta}}u^2(x)\d x\le C\delta\|u\|^2_{H^1_h}\ \ \forall\ u\in H^1_h.
\end{equation}
Here $\Omega_\delta$ is a boundary strip of width $\delta$ attached to $\partial\Omega$.
\end{lem}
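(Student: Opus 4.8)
The plan is to reproduce the divergence computation that proves Theorem~\ref{tracetheorem}, but with a vector field that is concentrated in the strip and is itself of size $\O(\delta)$, so that every resulting term carries an explicit factor of $\delta$. First I would construct a piecewise smooth field $\!phi$ on $\Omega$, with continuous normal component across every straight segment, supported in $\overline{\Omega_\delta}$, and satisfying $\div\!phi=1$ on $\Omega_\delta$, $|\!phi|_{0,\infty,\Omega}\le C\delta$, and $\!phi\cdot\!n\le C\delta$ on $\partial\Omega$. Away from the vertices the field $\!phi=(d-\delta)\nabla d$, with $d(x)=\operatorname{dist}(x,\partial\Omega)$, does the job: on the flat part of the boundary $\nabla d$ is locally constant, so $\div\!phi=1$, while $|\!phi|=\delta-d\le\delta$ and $\!phi\cdot\!n=\delta$ on $\partial\Omega$; its normal component is continuous across the medial axis by reflection symmetry. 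Near the finitely many corners $\!phi$ must be redefined, which is the one genuinely delicate point, discussed below.

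With such a $\!phi$ in hand, since $\div\!phi=1$ on $\Omega_\delta$ and $\!phi$ vanishes on $\Omega^0_\delta$, I would integrate by parts element by element exactly as before to get
\begin{equation*}
\int_{\Omega_\delta}u^2=\int_{\Omega_h}u^2\div\!phi
=\int_{\partial\Omega}u^2\,\!phi\cdot\!n
+\sum_{e\in\E^0_h}\int_e\lbra u^2\!phi\rbra
-\int_{\Omega_h}2u\,\nabla u\cdot\!phi .
\end{equation*}
The boundary term is where the trace theorem pays off: using $\!phi\cdot\!n\le C\delta$ and \eqref{Omega-trace}, one has $\int_{\partial\Omega}u^2\,\!phi\cdot\!n\le C\delta\,\|u\|^2_{L^2(\partial\Omega)}\le C\delta\,\|u\|^2_{H^1_h}$, which already produces the desired factor $\delta$. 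The volume term is controlled by $|\!phi|\le C\delta$ together with Cauchy--Schwarz and Young: $|\int_{\Omega_h}2u\nabla u\cdot\!phi|\le \tfrac12\int_{\Omega_\delta}u^2+C\delta^2\int_{\Omega_\delta}|\nabla u|^2$, where the first summand is absorbed into the left-hand side and the second is $\O(\delta^2)\|u\|^2_{H^1_h}$.

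The jump term is handled verbatim as in \eqref{jump-on-e}: since $\!phi$ has continuous normal component, $|\lbra u^2\!phi\rbra|\le|\!phi|_{0,\infty,\Omega}|\lbra u^2\rbra|=2|\!phi|_{0,\infty,\Omega}|\lbra u\rbra\lbrac u\rbrac|$, and applying the elementwise trace estimate \eqref{trace} to the average $\lbrac u\rbrac$ gives, for each $e$ meeting the strip,
\begin{equation*}
\int_e|\lbra u^2\!phi\rbra|
\le C\delta\left[\frac1{|e|}\int_e\lbra u\rbra^2\right]^{1/2}
\left[\int_{\delta e}u^2+|e|^2\int_{\delta e}|\nabla u|^2\right]^{1/2}.
\end{equation*}
Summing over $e$ and using Cauchy--Schwarz, the first bracket sums to at most $\|u\|^2_{H^1_h}$. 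For the second bracket I would \emph{not} try to bound $|e|$ by $\delta$ (which fails, since the mesh is not quasi-uniform and a large triangle may straddle the strip); instead I use the trivial bound $|e|\le\operatorname{diam}\Omega$ together with the fact that each triangle lies in at most three co-boundaries $\delta e$, so $\sum_e\int_{\delta e}u^2\le3\|u\|^2_{L^2(\Omega)}$ and $\sum_e|e|^2\int_{\delta e}|\nabla u|^2\le 3(\operatorname{diam}\Omega)^2\|u\|^2_{H^1_h}$. Hence the jump term is likewise bounded by $C\delta\|u\|^2_{H^1_h}$, with $C$ depending on $\Omega$ and on $\K$ through \eqref{trace}. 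Collecting the three estimates and absorbing the $\tfrac12\int_{\Omega_\delta}u^2$ yields \eqref{stripe}.

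The step I expect to be the main obstacle is the construction of $\!phi$ near the vertices. At a convex corner the bisector is again a medial axis and the distance-function field extends with continuous normal component, but at a reentrant corner $v$ the nearest boundary point is $v$ itself, so $d=|x-v|$, $\Delta d=1/d$, and $\div\!phi=2-\delta/d$ becomes negative for $d<\delta/2$; the clean identity $\int_{\Omega_\delta}u^2=\int_{\Omega_h}u^2\div\!phi$ then fails. I would repair this in a disk of radius $\delta$ about each such vertex by replacing $\!phi$ with a field obtained by scaling a fixed reference field on the unit sector, which scales as $\delta$ in magnitude and as $1$ in divergence, chosen to match the outer normal trace on the circle $|x-v|=\delta$; since there are only finitely many vertices, shape regularity plays no role here, and both the magnitude bound $|\!phi|\le C\delta$ and the correct sign of the divergence are preserved. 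Everything else is a routine repetition of the trace-theorem argument.
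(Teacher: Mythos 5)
Your overall strategy is exactly the paper's: build a piecewise smooth field $\!phi$ with continuous normal component, $\div\!phi=1$ on the strip and $|\!phi|\le C\delta$, extend by zero, integrate by parts element by element, handle the boundary term with Theorem~\ref{tracetheorem}, the volume term with Cauchy--Schwarz, and the jumps with the \eqref{jump-on-e} argument. Those parts of your proof are fine. The gap is in the construction of $\!phi$, specifically your claim that at a convex corner the field $\!phi=(d-\delta)\nabla d$ ``extends with continuous normal component across the medial axis by reflection symmetry.'' This is false, and reflection symmetry in fact proves the opposite. Note that $\!phi=\nabla\bigl(\tfrac12(d-\delta)^2\bigr)$ is the gradient of a continuous, piecewise smooth function, and gradients of such functions have continuous \emph{tangential} components across kink curves, while the \emph{normal} component generally jumps. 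Concretely, at a convex corner with interior angle $2\alpha$, take the bisector as the positive $x$-axis; the one-sided limits of $\nabla d$ on the bisector are $(\sin\alpha,-\cos\alpha)$ and $(\sin\alpha,\cos\alpha)$, so with $\!n=(0,1)$ the jump is
\begin{equation*}
\lbra \!phi\cdot\!n\rbra=2(\delta-d)\cos\alpha\neq 0 \quad\text{whenever } d<\delta,\ \alpha\neq\pi/2 .
\end{equation*}
Symmetry forces the two one-sided normal components to be equal in magnitude and \emph{opposite in sign}, hence discontinuous unless both vanish.

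This matters because the mesh is arbitrary: triangles straddle the bisector, so the identity
$\sum_{\tau}\int_{\partial\tau}u^2\!phi\cdot\!n=\int_{\partial\Omega}u^2\!phi\cdot\!n+\sum_{e\in\E^0_h}\int_e\lbra u^2\!phi\rbra$
requires continuity of $\!phi\cdot\!n$ across \emph{every} curve, which is precisely why the paper imposes it. With your field, an extra term $\int_{\mathrm{bisector}\cap\Omega_\delta}u^2\lbra\!phi\cdot\!n\rbra$ appears that none of your three estimates controls; you would need an additional trace inequality on an interior segment to absorb it (it does carry a factor $\delta$, but that trace estimate is not proved anywhere in your argument). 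The paper avoids the issue by redefining the field at convex corners too: it inserts a wedge with vertex at the inner corner, sides orthogonal to the two boundary segments, carrying the radial field $\langle x,y\rangle/2$, whose normal component vanishes on the wedge sides and therefore matches the adjacent rectangular strip fields (which are parallel to those sides). A secondary, smaller issue: your reentrant-corner repair promises only ``the correct sign of the divergence,'' but the identity $\int_{\Omega_\delta}u^2=\int_{\Omega_h}u^2\div\!phi$ needs $\div\!phi=1$ on the strip (a uniform positive lower bound would suffice after adjusting constants, but sign alone does not); the paper's sector field $(1-\rho^2/r^2)\langle x,y\rangle/2$ achieves $\div\!phi=1$ exactly. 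So: right architecture, but the vector field must be repaired at convex corners as well as reentrant ones, and your stated justification for skipping them is mathematically incorrect.
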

\begin{proof}
We choose a piecewise smooth a vector field $\!phi$ whose normal component is continuous
across any curve such that $\!phi=0$ on the inner boundary of $\Omega_{\delta}$, and
$\div\!phi=1$ and $|\!phi|\le C\delta$ on $\Omega_{\delta}$.
(A construction of such $\!phi$ is given below.)
We then extend $\!phi$ by zero onto the entire domain $\Omega$. The extended, still denoted by $\!phi$,
is a piecewise smooth vector field whose normal components
is continuous over any curve in $\Omega$.
We thus have
\begin{equation*} 
\int_{\Omega_{\delta}}u^2=
\int_{\Omega}u^2\div\!phi
=\sum_{\tau\in\T_h}\int_{\tau}u^2\div\!phi=
-\sum_{\tau\in\T_h}\int_{\tau}2u\nabla u\cdot\!phi+
\sum_{\tau\in\T_h}\int_{\partial\tau}u^2\!phi\cdot\!n.
\end{equation*}
The last term can be written as
\begin{equation*}
\sum_{\tau\in\T_h}\int_{\partial\tau}u^2\!phi\cdot\!n=
\int_{\partial\Omega}u^2\!phi\cdot\!n+\sum_{e\in\E^0_h}\int_e\lbra u^2\!phi\rbra.
\end{equation*}
Since the normal components of $\!phi$ is continuous
on edges in $\E^0_h$, we use the same argument as in the proof of Theorem~\ref{tracetheorem}, cf., \eqref{jump-on-e},
to get
\begin{equation*}
\sum_{e\in\E^0_h}\int_e|\lbra u^2\!phi\rbra|\le C
|\!phi|_{0,\infty,\Omega}\left[\|u\|^2_{L^2(\Omega)}+\sum_{\tau\in\T_h}\int_{\tau}h^2_\tau|\nabla u|^2
+\sum_{e\in\E^0_h}\frac{1}{|e|}\int_e\lbra u\rbra^2\right].
\end{equation*}
It then follows from Theorem~\ref{tracetheorem} that
\begin{equation} \label{bd-shift}
\int_{\Omega_{\delta}}u^2\le C
|\!phi|_{0,\infty,\Omega}\left[\|u\|^2_{L^2(\Omega)}+\int_{\Omega_h}|\nabla u|^2
+\sum_{e\in\E^0_h}\frac{1}{|e|}\int_e\lbra u\rbra^2\right].
\end{equation}
The proof is complete since $|\!phi|_{0,\infty, \Omega}\le C\delta$.
The constant $C$ depends on $\Omega$ in terms of its interior angles and exterior angles at convex and
concave vertexes, respectively.
\end{proof}

We describe a way to choose the boundary strip and construct the vector field $\!phi$ that was
used in the proof.
This field can be constructed by piecing together several special vector fields.
We need some vector fields on rectangles, wedges, and circular disks.
On the $xy$-plane, we consider the vertical rectangular strip $R=(0,\delta)\x (0, l)$.
On this strip, we consider $\!psi_R=\langle x, 0\rangle$. This vector
field satisfies the condition that $\div\!psi_R=1$, $\!psi_R=0$ on the left side,
$\!psi_R\cdot\!n=0$ on the top and bottom sides and the maximum of $|\!psi_R|$ is $\delta$ that is attained
on the right side.
On a wedge $W$ with its vertex at the origin, we consider the vector field
$\!psi_W=\langle x, y\rangle/2$.
This $\!psi_W$ satisfies the conditions that
$\div\!psi_W=1$, $\!psi_W\cdot\!n=0$ on the two sides of $W$, and $|\!psi_W|=\rho/2$
at a point in $W$ whose distant from the origin is $\rho$.
On a circular disk $C$ centered at the origin and of radius $\rho$, we consider the vector field
$\!psi_C=(1-\rho^2/r^2)\langle x, y\rangle/2$. Here $r=(x^2+y^2)^{1/2}$.
This vector field satisfies the condition that $\div\!psi_C=1$ on the disk except at the center
where it is singular. It points toward the center  every where. And it is zero on the boundary.
We use this field on a sector of the circle $C$, on the two radial sides of which
we have $\!psi_C\cdot\!n=0$.
\begin{figure}[!ht]
\centerline{\input{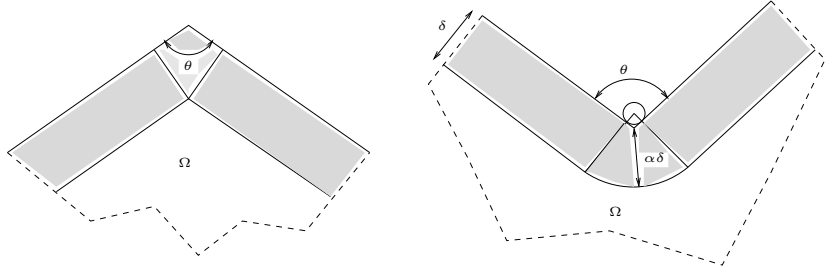}}
\caption{\label{boundarystrip}Boundary strip $\Omega_\delta$ near a convex vertex (left)
and a concave vertex (right).}
\end{figure}
With these special vector fields, we can then assemble the $\!phi$ on a boundary strip
$\Omega_\delta$. Along the interior side
of each straight segment of
$\partial\Omega$ we choose a uniform strip of thickness $\delta$. These strips overlap near
vertexes. If $\Omega$ is convex at a vertex, we introduce a wedge whose vertex is
at the intersection of the interior boundary of the uniform strips, and whose sides are orthogonal
to the meeting straight segments, see the left figure of Figure~\ref{boundarystrip}.
If $\Omega$ is concave at a vertex, we resolve it by using a circular sector, centered near the vertex
and outside of $\Omega$. The radius of the circle is slightly bigger than $\delta$
such that the arc is continuously connected to the
interior edges of the meeting strips, and the two radial sides are orthogonal to the meeting boundary
segments, see the right figure in Figure~\ref{boundarystrip}.
With such treatment of the vertexes, the boundary strip $\Omega_\delta$
is composed of
rectangular strips attaching to major portions of straight segments of
$\partial\Omega$, portion of wedges at convex vertexes, and portion of circular sectors
at concave vertexes.  See the shaded region in Figure~\ref{boundarystrip}.
We then transform $\!psi_R$, $\!psi_W$, and $\!psi_C$ to various parts of $\Omega_\delta$
and assemble a $\!phi$
on $\Omega_\delta$. The vector field $\!phi$ thus constructed is zero on the interior boundary
of $\Omega_\delta$. Its normal components are continuous across any curve, and
$\div\!phi=1$ on $\Omega_\delta$.
The thickness of $\Omega_\delta$ is the constant $\delta$ for the rectangular part.
It is maximized to $\delta/\sin\frac{\theta}2$ at a convex vertex. It is minimized to $\alpha\delta$
at the concave vertex, with $0<\alpha<1$, a value can be chosen as, for example, $1/2$. The norm $|\!phi|$
has a maximum $\delta/\sin\frac{\theta}2$ at a convex vertex with $\theta$ being the interior angle.
Thus when $\theta$ is small, $|\!phi|$ is big, and the estimate
\eqref{bd-shift} deteriorates.
The norm $|\!phi|$ also has a local maximum near a concave vertex.
It is bounded as
\begin{equation*}
|\!phi|\le\delta\frac{1-\alpha\sin\frac{\theta}{2}}{(1-\alpha)\sin\frac{\theta}{2}}.
\end{equation*}
When the exterior angle is small this maximum is big.
Also, one needs to choose $\alpha$ away from $1$ and $0$,
to maintain a moderate thickness of the strip and a reasonable bound for $|\!phi|$
which affect the estimate \eqref{bd-shift}.

We then prove that functions in $H^1_h$ are ``shift-continuous'' in $L^2$, as
stated in the following lemma. We extend a function $u\in H^1_h$
to a function $\tilde u$ on the whole $\RR^2$ by zero.
\begin{lem}\label{shift-continuity}
There is a constant $C$ depending on $\Omega$
and shape regularity $\K$ of $\T_h$, but otherwise independent of $\T_h$ such that
\begin{equation}\label{whole-shift}
\int_{\RR^2}[\tilde u(x+\rho)-\tilde u(x)]^2\d x\le C|\rho|\|u\|^2_{H^1_h}\ \ \forall\ u\in H^1_h.
\end{equation}
\end{lem}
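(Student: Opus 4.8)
The plan is to prove \eqref{whole-shift} by the classical slicing argument for shift continuity, modified to account for the jumps of $u$ across interior edges. We may assume $|\rho|\le\operatorname{diam}\Omega$, since for larger $|\rho|$ the left-hand side is at most $4\|u\|_{L^2(\Omega)}^2\le C|\rho|\|u\|_{H^1_h}^2$. Write $\hat\rho=\rho/|\rho|$ and foliate $\RR^2$ by the lines parallel to $\hat\rho$; by Fubini it suffices to bound the one-dimensional shift on each line and integrate over the transverse variable. Along such a line $\tilde u$ is a piecewise $H^1$ function of one variable, with jumps where the line crosses an interior edge $e\in\E^0_h$ (the jump being $\lbra u\rbra(y_e)$ at the crossing point $y_e$) and where it crosses $\partial\Omega$ (there the one-sided trace of $u$ jumps to the zero extension). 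The fundamental theorem of calculus on the line gives, whenever both $x$ and $x+\rho$ lie in $\Omega$,
\[
\tilde u(x+\rho)-\tilde u(x)=\int_0^{|\rho|}\nabla u(x+t\hat\rho)\cdot\hat\rho\,\d t+\sum_{e\ \mathrm{crossed}}\pm\lbra u\rbra(y_e(x)),
\]
with an analogous identity carrying a boundary-trace term when the segment meets $\partial\Omega$.

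Using $(a+b)^2\le 2a^2+2b^2$ I would split into a gradient part, a jump part, and a boundary part. For the gradient part, Cauchy--Schwarz in $t$ and Fubini give
\[
\int_{\RR^2}\Big(\int_0^{|\rho|}\nabla u(x+t\hat\rho)\cdot\hat\rho\,\d t\Big)^2\d x\le|\rho|\int_0^{|\rho|}\int_{\Omega_h}|\nabla u|^2\,\d t=|\rho|^2\int_{\Omega_h}|\nabla u|^2\le C|\rho|\|u\|_{H^1_h}^2,
\]
the last step again using $|\rho|\le\operatorname{diam}\Omega$. For the boundary part, a line meets $\partial\Omega$ only when $x$ or $x+\rho$ lies within distance $|\rho|$ of $\partial\Omega$, i.e.\ in the strip $\Omega_{|\rho|}$; hence these contributions are controlled by $\int_{\Omega_{|\rho|}}u^2$ and by boundary traces, which are bounded by $C|\rho|\|u\|_{H^1_h}^2$ through Lemma~\ref{bd-shift-theorem} and Theorem~\ref{tracetheorem}. (If $\Omega$ is nonconvex a segment may leave and re-enter $\Omega$; each such crossing is of the same type and is absorbed in the same way.)

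The jump part is the crux. Writing $J(x)=\sum_{e\ \mathrm{crossed}}|\lbra u\rbra(y_e(x))|$, the naive bound $J(x)^2\le N(x)\sum_e\lbra u\rbra(y_e)^2$, with $N(x)$ the number of crossings, is useless because $N(x)$ is not bounded independently of the mesh. Instead I would use a size-weighted Cauchy--Schwarz: for two co-crossed edges $e,e'$,
\[
|\lbra u\rbra(y_e)|\,|\lbra u\rbra(y_{e'})|\le\tfrac12\Big(\tfrac{|e'|}{|e|}\lbra u\rbra(y_e)^2+\tfrac{|e|}{|e'|}\lbra u\rbra(y_{e'})^2\Big),
\]
whence $J(x)^2\le\sum_{e\ \mathrm{crossed}}\frac{1}{|e|}\big(\sum_{e'\ \mathrm{crossed}}|e'|\big)\lbra u\rbra(y_e)^2$. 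Denoting by $S(\sigma)=\sum_{e\ \mathrm{crossed\ by}\ \sigma}|e|$ the total length of edges met by a segment $\sigma$, and changing variables from $x$ to $(y,s)\in e\times(0,|\rho|)$ with $x=y-s\hat\rho$ (Jacobian $|\sin\theta_e|$, where $\theta_e$ is the angle between $e$ and $\rho$, exactly as in \eqref{jump-on-e}), I obtain
\[
\int_{\RR^2}J^2\le\sum_{e\in\E^0_h}\frac{|\sin\theta_e|}{|e|}\int_e\lbra u\rbra^2\Big(\int_0^{|\rho|}S(y-s\hat\rho)\,\d s\Big)\d\ell.
\]

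Everything now reduces to the purely geometric estimate $S(\sigma)\le C(\K)\,|\sigma|$ for every segment $\sigma\subset\Omega$: the total length of interior edges crossed by a segment is controlled by the length of the segment, with a constant depending only on the shape regularity. Granting it, $\int_0^{|\rho|}S(y-s\hat\rho)\,\d s\le C|\rho|^2$, and since $|\sin\theta_e|\le1$,
\[
\int_{\RR^2}J^2\le C|\rho|^2\sum_{e\in\E^0_h}\frac1{|e|}\int_e\lbra u\rbra^2\le C|\rho|^2\|u\|_{H^1_h}^2\le C|\rho|\,\operatorname{diam}(\Omega)\,\|u\|_{H^1_h}^2,
\]
which with the gradient and boundary parts yields \eqref{whole-shift}. \textbf{The main obstacle is precisely this geometric counting lemma.} The difficulty is corner clipping: a segment can cross two long edges of a large triangle while traversing an arbitrarily short chord, so one cannot simply compare $|e|$ with the local chord length. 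The structural facts I would exploit are that shape regularity forces two triangles sharing an edge $e$ to have diameters comparable to $|e|$ (so the mesh size changes only by bounded factors at each crossing), and that triangles of diameter $\sim s$ met by $\sigma$ are disjoint and lie in a tube of area $O(|\sigma|s+s^2)$ about $\sigma$, so at most $O(|\sigma|/s+1)$ of them occur; organizing the crossed edges by dyadic size, the clipped contributions telescope against the traversed ones to give $S(\sigma)\lesssim|\sigma|$. Making this rigorous, in particular ruling out an accumulation of clips across many scales, is where the shape regularity hypothesis is essential and is the heart of the argument.
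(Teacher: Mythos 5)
Your overall architecture is the same as the paper's: split off a boundary strip handled by Lemma~\ref{bd-shift-theorem}, use the fundamental theorem of calculus along segments $[x,x+\rho]$ with jump terms at edge crossings, bound the gradient part by Fubini, apply the size-weighted Cauchy--Schwarz inequality $J(x)^2\le\bigl(\sum_e|e|^{-1}\lbra u\rbra^2\bigr)\bigl(\sum_{e'}|e'|\bigr)$, and convert the $x$-integral to edge integrals via the parallelogram change of variables with Jacobian $|\sin\theta_e|$. All of that matches the paper. The genuine gap is exactly where you say it is: the geometric counting lemma, which you leave unproven. Worse, the form you propose to prove, $S(\sigma)\le C(\K)|\sigma|$, is \emph{false}: take $\sigma$ of length $\epsilon$ crossing transversally a single edge of length $1$ in a coarse shape-regular mesh; then $S(\sigma)\ge 1$, which no constant multiple of $\epsilon$ dominates. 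Your own "corner clipping" remark is a symptom of this, but the counterexample needs only one crossing, so no refinement of the clipping analysis can rescue the statement. The dyadic tube-counting sketch also cannot work even for a corrected statement: at each dyadic scale $s\le|\sigma|$ the crossed triangles contribute edge length $O(|\sigma|+s)$, and summing over the unboundedly many scales between the smallest crossed triangle and $|\sigma|$ produces a factor $\log(|\sigma|/s_{\min})$ that is not controlled by the shape regularity.

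The two-part fix is as follows. First, you do not need the linear-in-$|\sigma|$ bound: the weaker estimate $S(\sigma)\le C(\Omega,\K)$ (the paper's \eqref{zigzag}, with $C$ of the order of $\operatorname{diam}\Omega$) already gives $\int_0^{|\rho|}S\,\d s\le C|\rho|$, and inserting this into your final display yields $\int J^2\le C|\rho|\sum_{e\in\E^0_h}|e|^{-1}\int_e\lbra u\rbra^2$, which is precisely the $C|\rho|\|u\|^2_{H^1_h}$ bound required by \eqref{whole-shift}; your detour through $C|\rho|^2$ and then relaxing to $C|\rho|\operatorname{diam}\Omega$ was never necessary. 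Second, the weaker estimate still requires a proof that is not a tube/area count: the paper extends the segment to a full line $l$, trims the set of crossed edges at each vertex (using that shape regularity bounds the number of edges at a vertex and makes adjacent edge lengths comparable) down to a zigzag path of crossed edges, and then compares each pair of consecutive path-edge portions $a,b$ meeting at a vertex against the subtended chord $c\subset l$ via the law of cosines and the minimum angle $\theta_\K$, giving $a+b\le\sqrt{2/(1-\cos\theta_\K)}\,c$; summing the disjoint chords gives $\sum_{e\in\E^l_h}|e|\le C|l\cap\Omega|$, hence the constant bound. Without this (or an equivalent) argument, your proof is incomplete at its central step.
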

\begin{proof}
Because for an element $\tau\in\T_h$, smooth functions are dense in $H^1(\tau)$, we only need
to prove \eqref{whole-shift}
for functions that are smooth on each element of $\T_h$. Let $u$ be such a piecewise smooth function.
Let $\rho$ be an arbitrary short vector. We take $\delta=|\rho|$ and choose a boundary strip $\Omega_\delta$.
The interior part $\Omega^0_{\delta}$ of the domain has the property that
if $x\in \Omega^0_{\delta}$ then the line segment $[x, x+\rho]\subset \Omega$. We have
\begin{equation*}
\int_{\RR^2}[\tilde u(x+\rho)-\tilde u(x)]^2\d x
=\int_{\Omega_\delta^0}[u(x+\rho)-u(x)]^2\d x
+\int_{\RR^2\setminus\Omega_\delta^0}[\tilde u(x+\rho)-\tilde u(x)]^2\d x.
\end{equation*}
Using Lemma~\ref{bd-shift-theorem}, we bound the second term as
\begin{equation}\label{bd-strip-est}
\int_{\RR^2\setminus\Omega_\delta^0}[\tilde u(x+\rho)-\tilde u(x)]^2\d x\le
\int_{\Omega_{2\delta}}u^2(x)\d x\le C|\rho|\|u\|^2_{H^1_h}.
\end{equation}
We then
focus on the first term.
This integral can be taken on an equal measure subset of
$\Omega^0_\delta$. This subset is obtained by removing a zero measure subset that
is composed of such point $x$:
$x$ or $x+\rho$ is on an open edge $e\in\E^0_h$,
or the closed line segment $[x, x+\rho]$ contains any vertex of $\T_h$,
or $[x, x+\rho]$ overlaps some edges of $\E^0_h$.
By such exclusion, for any $x$ in the remaining set, both the ends of $[x, x+\rho]$
are in the interior of some open triangular elements, and $[x, x+\rho]$ contains no
vertex. The restriction of $u$ on $[x, x+\rho]$ is a piecewise smooth one dimensional function,
which may have
a finite number of jumping points in $(x, x+\rho)$.
By the fundamental theorem
of calculus, we have
\begin{equation*}
u(x+\rho)-u(x)=\int_0^1\nabla u(x+t\rho)\cdot\rho \d t+\sum_{p\in
[x, x+\rho]\cap\E^0_h}\lbra u\rbra_p
\end{equation*}
Note that the integrand in the integral may make no sense at $t$, if
$x+t\rho\in\E^0_h$, where $u$ may jump. These points are excluded from the integration, where the
jumping effect is resolved in the second term.
On the segment $[x, x+\rho]$, $u$ may have a jump at $p\in [x, x+\rho]\cap\E^0_h$, which is denoted by
$\lbra u\rbra_p$ that is the value of $u$ from the $x$ side minus that from the $x+\rho$ side.
We thus have
\begin{equation*}
[u(x+\rho)-u(x)]^2\le
|\rho|^2\int_0^1|\nabla u(x+t\rho)|^2\d t+\left[\sum_{p\in
[x, x+\rho]\cap\E^0_h}\lbra u\rbra_p\right]^2.
\end{equation*}
When we take integral on $\Omega^0_\delta$ (minus the aforementioned zero-measure subset),
the first term is bounded as follows.
\begin{equation*}
\int_{\Omega^0_{\delta}}
|\rho|^2\int_0^1|\nabla u(x+t\rho)|^2\d t\d x=
|\rho|^2\int_0^1\int_{\Omega^0_{\delta}}|\nabla u(x+t\rho)|^2\d x\d t\\
\le
|\rho|^2\int_{\Omega_h}|\nabla u|^2\d x.
\end{equation*}

To estimate the jumping related second term, we write $\lbra u\rbra_p=|e|^{1/2}|e|^{-1/2}\lbra u\rbra_p$
if $p\in [x, x+\rho]\cap \E^0_h$ is on the edge $e$, and
use the Cauchy-Schwarz inequality to reach the following estimate.
\begin{equation*}
\left[\sum_{p\in
[x, x+\rho]\cap\E^0_h}\lbra u\rbra_p\right]^2\le
\left[\sum_{e\cap[x, x+\rho]\ne\emptyset}|e|^{-1}\lbra u\rbra^2_{e\cap[x, x+\rho]}\right]
\left[\sum_{e\cap[x, x+\rho]\ne\emptyset}|e|\right].
\end{equation*}
We show below that
there is a $C$, depending on the domain $\Omega$ and the
shape regularity $\K$ of $\T_h$ , such that
\begin{equation}\label{zigzag}
\sum_{e\cap[x, x+\rho]\ne\emptyset}|e|\le C.
\end{equation}
We then have
\begin{equation*}
\int_{\Omega^0_{\delta}}\left[\sum_{p\in
[x, x+\rho]\cap\E^0_h}\lbra u\rbra_p\right]^2\le
C\int_{\Omega^0_{\delta}}
\sum_{e\cap[x, x+\rho]\ne\emptyset}|e|^{-1}\lbra u\rbra^2_{e\cap[x, x+\rho]}\d x.
\end{equation*}
Every term in the right hand side is associated with a particular edge $e\in \E^0_h$. Each edge
$e\in\E^0_h$ is relevant to at most the points
in the parallelogram $\Omega_e$ in the Figure~\ref{shift}.
Thus, by changing the order of sum and integral, we have
\begin{multline*}
\int_{\Omega^0_{\delta}}
\sum_{e\cap[x, x+\rho]\ne\emptyset}|e|^{-1}\lbra u\rbra^2_{e\cap[x, x+\rho]}\d x
\le
\sum_{e\in\E^0_h}|e|^{-1}\int_{\Omega_e}\lbra u\rbra^2_{e\cap[x, x+\rho]}\d x\\
\le
\sum_{e\in\E^0_h}|e|^{-1}\sin\langle\rho, e\rangle|\rho|\int_{e}\lbra u\rbra^2
\le
|\rho|\sum_{e\in\E^0_h}|e|^{-1}\int_{e}\lbra u\rbra^2.
\end{multline*}
Here, $\langle\rho, e\rangle$ is the angle between the vector $\rho$ and the edge $e$.
\begin{figure}[!ht]
\centerline{\input{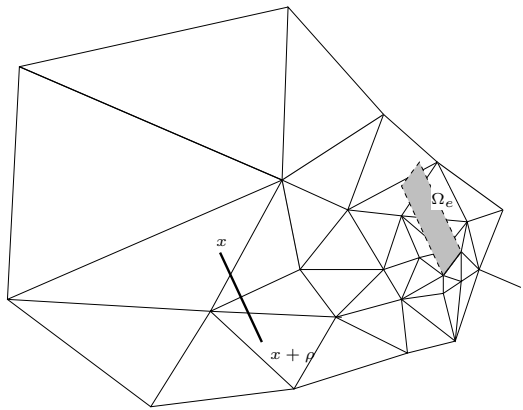}}
\caption{\label{shift}
A $\rho$ shift and $\Omega_e$ for an edge $e$.}
\end{figure}
Therefore, we have
\begin{equation*} 
\int_{\Omega^0_{\delta}}[u(x+\rho)-u(x)]^2dx\le
|\rho|^2|\nabla u|^2_{0, \Omega_h}+|\rho|\sum_{e\in\E^0_h}|e|^{-1}\int_e\lbra u\rbra^2.
\end{equation*}
Note that the second term may carry the smaller coefficient $|\rho|\max\{h,|\rho|\}$
such that the two terms
are closer in the order. But we do not need such refined estimates.
We thus proved
\begin{equation}\label{interior-shift}
\int_{\Omega^0_{\delta}}[u(x+\rho)-u(x)]^2\d x\le C|\rho|\|u\|_{H^1_h}\ \ \forall\ u\in H^1_h.
\end{equation}
The shift continuity \eqref{whole-shift} then follows from \eqref{interior-shift} and  \eqref{bd-strip-est}.
We have shown that the set of zero extended functions is shift continuous
in $L^2(\RR^2)$.
\end{proof}
We give a proof for the estimate \eqref{zigzag}.
Let $l$ be a straight line cutting through $\Omega$. Let $\T_h$ be a shape regular triangulation
with regularity constant $\K$. Then the sum of lengths of mesh line segments
intersecting $l$ is bounded independent of the triangulation. More specifically, 
we prove that there is a constant $C$, depending on the shape regularity $\K$, 
but otherwise independent of the triangulation $\T_h$
such that
\begin{equation}\label{zigzag1}
\sum_{e\in\E_h\text{ and }e\cap l\ne\emptyset}|e|\le C. 
\end{equation}
We shall use some facts that follow from the shape regularity assumption.
There is a minimum angle $\theta_\K$ among all angles of triangles of $\T_h$. The number of edges 
sharing a vertex is bounded by a constant $C$ that only depends on $\K$.
Let $e_1$ and $e_2$ be two edges 
sharing a 
vertex. There are constants $C_1$ and $C_2$ depending on $\K$ such that 
$|e_1|\le C_1|e_2|$ and $|e_2|\le C_2|e_1|$. 
Without loss of generality, we assume $l$ is horizontal. To simplify the argument, we also 
assume that $l$ does not pass any vertex. (This restriction can be removed by a slight modification
of the following argument.)
\begin{figure}[!ht]
\centerline{\input{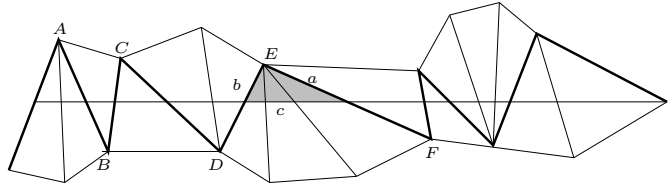}}
\caption{\label{cut-tri}A line cutting through the triangulation.}
\end{figure}
We first trim the set of intersecting edges  $\{e\in\E_h\text{ and }e\cap l\ne\emptyset\}$.
Consider the left most edge intersecting $l$, of which $A$ is the end vertex  
shared by some other edges intersecting $l$. If $A$ 
is above $l$, we examine all the edges intersecting $l$ and sharing $A$ in the counterclockwise order.
We discard all such edges but the last one that is $AB$ in Figure~\ref{cut-tri}.
(The next edge sharing $A$, as $AC$, does not intersect $l$.) The edge $BC$ must intersect $l$.
There could be other edges intersecting $l$ and sharing the vertex $B$.
We examine all the edges sharing $B$ and intersecting $l$ in the clockwise order. We discard
all but the last one. (It is $BC$ in the figure.) Now the vertex $C$ is in the same situation as $A$, and 
we can determine the edge $CD$ using the same rule as for $AB$. Then we determine $DE$, $EF$, and so forth.
We do the trimming all the way 
to the right end of $l$. This procedure touches all the edges 
intersecting $l$, by either trimming an edge off or keeping it.
The remaining edges constitute a continuous piecewise straight path
as represented by the thick line in the figure. We denote this set by $\E^l_h$. 
It follows from the aforementioned facts about the shape regular triangulation that there is 
a constant $C$, depending on $\K$ only, such that
\begin{equation*} 
\sum_{e\in\E_h\text{ and }e\cap l\ne\emptyset}|e|\le C\sum_{e\in\E^l_h}|e|. 
\end{equation*}
We consider a typical triangle bounded by $l$ and $\E^l_h$, as the shaded one 
in the figure, whose sides are $a$, $b$, and $c$. Let the angle $\angle DEF$ be denoted by $\theta$.
We have $\theta\ge\theta_\K$. Note that 
$c^2=a^2+b^2-2ab\cos\theta$. If $\theta$ is obtuse, then $a+b\le \sqrt 2 c$.
Otherwise, we have $c^2=(a^2+b^2)(1-\cos\theta)+(a-b)^2\cos\theta\le (a^2+b^2)(1-\cos\theta)$. Thus 
$a+b\le\sqrt{\frac2{1-\cos\theta}}c$. In any case, we have $a+b\le\sqrt{\frac2{1-\cos\theta_\K}}c$.
We thus proved 
\begin{equation*}
\sum_{e\in\E^l_h\cap\E^0_h}|e|\le\sqrt{\frac2{1-\cos\theta_\K}}|l\cap\Omega|.
\end{equation*}
From this, \eqref{zigzag1} follows. 
We can now prove the following compact embedding theorem.
\begin{thm}\label{uniformcompactembedding}
Let $\T_{h_i}$ be a (infinite) family of shape regular but not necessarily quasi-uniform
triangulations of the polygonal domain $\Omega$, with a shape regularity constant $\K$.
For each $i$, let $H^1_{h_i}$ be the space of piecewise $H^1$ functions, subordinated to the
triangulation $T_{h_i}$,
equipped with the norm \eqref{H1hnorm}.
Let $\{u_i\}$ be a bounded sequence such that $u_i\in H^1_{h_i}$ for each $i$. I.e.,
there is a constant $C$, such that $\|u_i\|_{H^1_{h_i}}\le C$ for all $i$.
Then, the sequence $\{u_i\}$ has a convergent subsequence in $L^2(\Omega)$.
\end{thm}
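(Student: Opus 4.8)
The plan is to deduce the theorem from the Fréchet--Kolmogorov (Kolmogorov--Riesz) compactness criterion in $L^2(\RR^2)$, using the uniform shift-continuity already established in Lemma~\ref{shift-continuity} as the decisive ingredient. I would extend each $u_i$ by zero to $\tilde u_i$ on $\RR^2$. All the $\tilde u_i$ are supported in the fixed compact set $\overline\Omega$, so the family is automatically tight, and it is bounded in $L^2(\RR^2)$ because $\|u_i\|_{L^2(\Omega)}\le\|u_i\|_{H^1_{h_i}}\le C$. The one nontrivial hypothesis of the criterion is equicontinuity under translation, and this is exactly what Lemma~\ref{shift-continuity} supplies: since the constant there depends only on $\Omega$ and $\K$ and not on the individual triangulation $\T_{h_i}$, the estimate $\int_{\RR^2}[\tilde u_i(x+\rho)-\tilde u_i(x)]^2\,\d x\le C|\rho|\,\|u_i\|^2_{H^1_{h_i}}$ holds \emph{uniformly in $i$}.

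I would make this concrete through mollification, which delivers the required subsequence directly. Fix a standard mollifier $\eta_\epsilon$ supported in $\{|y|\le\epsilon\}$ with $\int\eta_\epsilon=1$, and set $u_i^\epsilon=\tilde u_i*\eta_\epsilon$. First, Jensen's inequality together with the uniform shift bound gives $\|u_i^\epsilon-\tilde u_i\|^2_{L^2(\RR^2)}\le\int\eta_\epsilon(y)\,\|\tilde u_i(\cdot-y)-\tilde u_i\|^2_{L^2}\,\d y\le C\epsilon$, so the mollified functions approximate the originals in $L^2$ uniformly in $i$. Second, for each fixed $\epsilon$ the family $\{u_i^\epsilon\}_i$ is uniformly bounded and equicontinuous: the pointwise bound $|u_i^\epsilon(x)|\le\|\eta_\epsilon\|_{L^2}\|\tilde u_i\|_{L^2}$ and the modulus of continuity inherited from the smooth kernel $\eta_\epsilon$ are both independent of $i$, and all $u_i^\epsilon$ are supported in a fixed bounded set. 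By Arzelà--Ascoli, $\{u_i^\epsilon\}_i$ then has a subsequence converging uniformly, hence in $L^2$.

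To finish I would run the usual diagonal argument. Taking $\epsilon_m=1/m$, I extract nested subsequences along which $u_i^{\epsilon_m}$ converges in $L^2$ for each $m$, and pass to the diagonal subsequence $\{u_{i_k}\}$. The three-term splitting $\|\tilde u_{i_k}-\tilde u_{i_l}\|\le\|\tilde u_{i_k}-u_{i_k}^{\epsilon_m}\|+\|u_{i_k}^{\epsilon_m}-u_{i_l}^{\epsilon_m}\|+\|u_{i_l}^{\epsilon_m}-\tilde u_{i_l}\|$, with the outer terms controlled uniformly by the approximation bound (choosing $m$ large) and the middle term controlled by the $L^2$-convergence of $\{u_{i_k}^{\epsilon_m}\}_k$, shows that $\{\tilde u_{i_k}\}$ is Cauchy in $L^2(\RR^2)$. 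Restricting the limit to $\Omega$ yields the desired convergent subsequence.

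The essential point, and the only place where genuine difficulty resides, is upstream in Lemma~\ref{shift-continuity}: it is the independence of its constant from the triangulation that makes translation-equicontinuity hold uniformly across the entire family $\{\T_{h_i}\}$. Once that uniformity is secured, the compactness argument above is the standard Kolmogorov--Riesz/Rellich machinery, and the only care required is to keep every bound independent of $i$.
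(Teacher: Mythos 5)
Your proposal is correct and takes essentially the same route as the paper: both arguments rest entirely on the uniform-in-$i$ shift continuity of Lemma~\ref{shift-continuity} applied to the zero extensions $\tilde u_i$, followed by the $L^2$ compactness criterion of Fr\'echet--Kolmogorov type. The only difference is that the paper simply cites that criterion (Theorem~2.12 of \cite{Adams}), whereas you reprove it in-line via mollification, Arzel\`a--Ascoli, and a diagonal argument --- a self-contained elaboration of the same machinery rather than a different approach.
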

\begin{proof}
It follows from \eqref{whole-shift} that the sequence $\{u_i\}$ is a shift-continuous
subset of $L^2(\Omega)$.
The result then follows from the well-known condition
for a subset of $L^2(\Omega)$ to be compact, see Theorem~2.12 in \cite{Adams}, for example.
\end{proof}

\section{Poincar\'e--Friedrichs type inequalities for piecewise $H^1$ functions}
Poincar\'e--Friedrichs type inequalities have been generalized to spaces of
piecewise $H^1$ functions.
Several variants and proof methods
of such inequalities can be found, for example, in \cite{A-int, Brenner-1, Feng}.
We provide an alternative proof for this kind of inequalities by using the compactness result
of the previous section, to demonstrate how to use the compact embedding theorem
to obtain estimates that are uniformly valid with respect to triangulations.
This method is a modification of the classical methods of proving
some of the Poincar\'e--Friedrichs inequality based on compactness argument.

\begin{thm}\label{general-Poincare-thm}
Let $\T_h$ be a shape regular, but not necessarily quasi-uniform triangulation of the
polygon $\Omega$.
Let $f$ be a semi-norm
defined on the space $H^1_h$ that satisfies two conditions.

1) There is a constant $C$
that only depends on $\Omega$ and the shape regularity of $\T_h$, otherwise it is independent of $\T_h$, such that
\begin{equation}\label{fbound}
f(u)\le C\|u\|_{H^1_h}\ \ \forall\ u\in H^1_h.
\end{equation}

2) For any constant $c$, $f(c)=0$ if and only if $c=0$.

Then, there exists a constant $C$ depending only on the domain $\Omega$ and the shape
regularity constant of $\T_h$, but otherwise independent of $\T_h$,
such that
\begin{equation}\label{pw-general-Poincare}
\|u\|^2_{L^2(\Omega)}\le C\left[\int_{\Omega_h}|\nabla u|^2
+\sum_{e\in\E^0_h}\frac{1}{|e|}\int_e\lbra u\rbra^2
+f^2(u)\right]\ \ \forall\ u\in H^1_h.
\end{equation}
\end{thm}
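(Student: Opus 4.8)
The plan is to argue by contradiction, using the compact embedding Theorem~\ref{uniformcompactembedding} to run a compactness argument that is uniform over the entire family of admissible triangulations. Suppose the asserted inequality fails for every choice of constant. Then there is a sequence of triangulations $\T_{h_n}$ with shape regularity bounded by $\K$ and functions $u_n\in H^1_{h_n}$ which, after normalization, satisfy $\|u_n\|_{L^2(\Omega)}=1$ while
\begin{equation*}
\int_{\Omega_{h_n}}|\nabla u_n|^2+\sum_{e\in\E^0_{h_n}}\frac1{|e|}\int_e\lbra u_n\rbra^2+f^2(u_n)<\frac1n.
\end{equation*}
In particular $\|u_n\|_{H^1_{h_n}}$ is uniformly bounded, so Theorem~\ref{uniformcompactembedding} yields a subsequence, still denoted $u_n$, that converges in $L^2(\Omega)$ to some $u$ with $\|u\|_{L^2(\Omega)}=1$; thus $u\ne 0$.

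The heart of the argument is to show that this $L^2$ limit is a \emph{constant}. I would test against an arbitrary vector field $\!v\in C^\infty_c(\Omega)^2$ and integrate by parts element by element, exactly as in the proof of Theorem~\ref{tracetheorem}. Since $\!v$ is continuous and compactly supported, the boundary contributions cancel pairwise into interior jumps and
\begin{equation*}
\int_\Omega u_n\,\div\!v=-\int_{\Omega_{h_n}}\nabla u_n\cdot\!v+\sum_{e\in\E^0_{h_n}}\int_e\lbra u_n\rbra\,\!v\cdot\!n.
\end{equation*}
The first term is bounded by $\|\nabla u_n\|_{L^2(\Omega_{h_n})}\|\!v\|_{L^2(\Omega)}\le n^{-1/2}\|\!v\|_{L^2(\Omega)}$. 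For the jump term I would insert the weights $|e|^{\pm1/2}$ and apply Cauchy--Schwarz to extract the factor $(\sum_e|e|^{-1}\int_e\lbra u_n\rbra^2)^{1/2}\le n^{-1/2}$ times $(\sum_e|e|\int_e|\!v|^2)^{1/2}$; the elementwise trace estimate \eqref{trace}, together with $|e|\le\operatorname{diam}\Omega$ and the fact that each element is the co-boundary of at most three edges, bounds this last factor by $C(\|\!v\|_{L^2(\Omega)}^2+(\operatorname{diam}\Omega)^2\|\nabla\!v\|_{L^2(\Omega)}^2)^{1/2}$, a constant depending on $\!v$, $\Omega$, and $\K$ but \emph{not} on the mesh. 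Both terms therefore tend to $0$, and passing to the $L^2$ limit gives $\int_\Omega u\,\div\!v=0$ for every such $\!v$, i.e. $\nabla u=0$ distributionally. As $\Omega$ is connected, $u\equiv c$ for some constant $c\ne0$.

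It remains to contradict the non-degeneracy condition~2). Because $u_n\to c$ in $L^2(\Omega)$ while the broken gradient and the jump seminorm of $u_n$ tend to $0$, and because a constant has vanishing gradient and vanishing jumps, I have $\|u_n-c\|_{H^1_{h_n}}\to0$. Applying \eqref{fbound} to $u_n-c$ and using that $f$ is a seminorm,
\begin{equation*}
f(c)\le f(u_n)+f(u_n-c)\le f(u_n)+C\|u_n-c\|_{H^1_{h_n}}\longrightarrow0,
\end{equation*}
since $f(u_n)\le n^{-1/2}\to0$. Hence $f(c)=0$ with $c\ne0$, which violates condition~2) (equivalently, writing $f(c)=|c|\,f(1)$ with $f(1)>0$ fixed in terms of $\Omega$). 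This contradiction establishes the inequality with a constant that by construction depends only on $\Omega$ and $\K$.

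I expect the main obstacle to be the middle step: proving that the $L^2$ limit is genuinely constant even though the $u_n$ live on \emph{different} meshes, so no single broken Sobolev structure survives in the limit. The device that makes this work uniformly is the mesh-independent bound on $\sum_e|e|\int_e|\!v|^2$ supplied by the trace inequality \eqref{trace}; without it the jump contribution in the integration by parts could not be controlled across the family, and the passage to $\nabla u=0$ would fail to be uniform.
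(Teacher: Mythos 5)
Your proposal is correct and follows essentially the same route as the paper's proof: a contradiction argument powered by the uniform compact embedding of Theorem~\ref{uniformcompactembedding}, elementwise integration by parts with the trace estimate \eqref{trace} applied to the (mesh-independent) test function to show the $L^2$ limit has vanishing weak derivatives and is therefore constant, and then the seminorm conditions to eliminate that constant. The only cosmetic differences are your normalization $\|u_n\|_{L^2(\Omega)}=1$ with the contradiction landing on condition 2) --- the paper instead normalizes $\|u_i\|_{H^1_{h_i}}=1$, deduces that the limit is zero, and contradicts that normalization --- and your use of compactly supported vector fields tested against the divergence in place of the paper's scalar test functions paired with $\partial_1\phi$ and $\partial_2\phi$.
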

A major point here is that the constant $C$ only depends on the shape regularity of $\T_h$.
Had one only considered a particular  triangulation, such inequality would
easily follow from the
Rellich--Kondrachov
compact embedding theorem \cite{Sobolev}
and Peetre's lemma (Theorem 2.1, page 18 in \cite{Raviart}).
But this argument can not establish the independence of the $C$ on the triangulation.
To establish the independence of $C$ of the triangulation,
we need to consider the entire family of all triangulations that
has a finite shape regularity constant as a family.
We need to modify the arguments of proving the Peetre's lemma to handle the family of spaces.
\begin{proof}[Proof of Theorem~\ref{general-Poincare-thm}]
The inequality \eqref{pw-general-Poincare} is equivalent to
\begin{equation}\label{general-Poincare-proof}
\|u\|^2_{H^1_h}\le C\left[\int_{\Omega_h}|\nabla u|^2
+\sum_{e\in\E^0_h}\frac{1}{|e|}\int_e\lbra u\rbra^2
+f^2(u)\right]\ \ \forall\ u\in H^1_h.
\end{equation}
If there is not a constant only depending on the shape regularity of $\T_h$, such that this inequality
holds, then there is a sequence of
shape regular triangulations $\T_{h_i}$ with a common shape regularity constant, and a sequence of functions
$u_i\in H^1_{h_i}$ such that

\begin{equation}\label{uis1}
\|u_i\|_{H^1_{h_i}}=1
\end{equation}and
\begin{equation}\label{to0}
\int_{\Omega_{h_i}}|\nabla u_i|^2
+\sum_{e\in\E^0_{h_i}}\frac{1}{|e|}\int_e\lbra u_i\rbra^2
+f^2(u_i)\to 0\ \ (i\to \infty).
\end{equation}
According to Theorem~\ref{uniformcompactembedding}, this sequence of functions has a convergent subsequence, still denoted by $u_i$,
in $L^2(\Omega)$. We let the limit be $u_0\in L^2(\Omega)$. We claim that this $u_0$ is a constant, and
the constant is zero.
We show this by verifying that the weak derivatives of $u_0$ is zero.
For a compactly supported smooth function $\phi$, we
have
\begin{equation*}
\int_{\Omega}u_0\partial_1\phi=\lim_{i\to\infty}\int_{\Omega_{h_i}}u_i\partial_1\phi.
\end{equation*}
For an $i$, by using integration by parts on each triangle,
we write the above right hand side as
\begin{equation*}
\int_{\Omega_{h_i}}u_i\partial_1\phi=
-\sum_{\tau\in\T_{h_i}}\int_\tau\partial_1 u_i\phi+\sum_{e\in\E^0_{h_i}}\int\lbra u_i\rbra_{n_1}\phi.
\end{equation*}
Here, on an edge $e$, $\lbra u\rbra_{n_1}=u_+n_{1+}+u_-n_{1-}$ if $e$ is shared by $\tau_+$ and $\tau_-$
and $n_{1+}$ is the first component of the unit outward normal of $\tau_+$.
Using Lemma~\ref{trace} to $\phi$, it follows that
\begin{equation*}
\left|\int_{\Omega_{h_i}}u_i\partial_1\phi\right|
\le
C\left[\int_{\Omega_{h_i}}|\nabla u_i|^2
+\sum_{e\in\E^0_{h_i}}\frac{1}{|e|}\int_e\lbra u_i\rbra^2\right]^{1/2}\|\phi\|_{H^1(\Omega)}.
\end{equation*}
In view of \eqref{to0}, we see that the weak derivative $\partial_1u_0$ is zero.
For the same reason, $\partial_2u_0=0$.
Thus $u_0$ is a constant.
We see from \eqref{to0} that $\|u_i-u_0\|_{H^1_{h_i}}\to 0$. Thus, by  the condition \eqref{fbound}
$f(u_0-u_i)\le C\|u_0-u_i\|_{H^1_{h_i}}\to 0$ as $i\to\infty$.
It follows from
$f(u_0)\le f(u_i)+f(u_0-u_i)\ \ \forall\ i$
that $f(u_0)=0$. Therefore, $u_0=0$. Thus,
$\|u_i\|_{H^1_{h_i}}\to 0$ when $i\to \infty$. This is contradict to \eqref{uis1}.
\end{proof}

In the classical Poincar\'e--Friedrichs inequalities, typical forms of the semi-norm
$f(u)$ are
\begin{equation*}
f_1(u)=\left[\int_\Gamma u^2\right]^{1/2},\quad
f_2(u)=\left|\int_{\Gamma}u\right|,\quad \text{or}\quad
f_3(u)=\left|\int_{\omega}u\right|.
\end{equation*}
Here $\Gamma$ is, or a portion of, the boundary $\partial\Omega$,
(which could be a segment with positive length of any curve
in $\Omega$,) and $\omega$ is, or a sub-domain of, $\Omega$.
Every one of these can be put in the position of the $f$ in Theorem~\ref{general-Poincare-thm}
to obtain Poincar\'e--Friedrichs inequalities for piecewise $H^1$ functions \cite{A-int, Brenner-1}.
To see the validity of the inequality \eqref{pw-general-Poincare} for each of these semi-norms,
one needs to verify the
uniform boundedness condition \eqref{fbound},
in which the constant $C$ is only allowed to depend on the
shape regularity of $\T_h$ and $\Omega$, and show that for any constant $c$,
$f(c)=0$ if and only if $c=0$. This latter condition is obviously
met by all of them. For $f_1$, the uniform boundedness condition follows from
the trace theorem, see \eqref{Omega-trace}. For $f_2$, the condition follows from the H\"older inequality
on $\Gamma$ and the condition for $f_1$. For $f_3$, it follows from the H\"older
inequality that $f_3(u)\le|\omega|^{1/2}\|u\|_{L^2(\omega)}\le|\Omega|^{1/2}\|u\|_{H^1_h}$.

By using a similar modification of the compactness argument, one can 
generalize the Korn inequality for shells (see \cite{Ciarlet}) 
to the space of piecewise functions, which seems useful 
for the analysis of discontinuous finite element methods for shells.


\bibliographystyle{plain}

\end{document}